\DeclareMathOperator{\Hom}{Hom}
\DeclareMathOperator{\Gal}{Gal}
\DeclareMathOperator{\St}{St}
\DeclareMathOperator{\Chr}{char}
\def\vF{\mathbb{F}}
\def\vN{\mathbb{N}}
\def\cO{\mathcal{O}}
\def\cS{\mathcal{S}}
\newtheorem{theorem}{Theorem}
\newtheorem{lemma}[theorem]{Lemma}
\newtheorem{corollary}[theorem]{Corollary}
\newtheorem{proposition}[theorem]{Proposition}
\newtheorem{conjecture}[theorem]{Conjecture}
\theoremstyle{definition}
\newtheorem{definition}[theorem]{Definition}
\newtheorem{remark}[theorem]{Remark}
\title[On the selection of polynomials for the DLP algorithm]{On the selection of polynomials for the DLP quasi-polynomial time algorithm in small characteristic}
\author[Giacomo Micheli]{Giacomo Micheli}
\begin{document}

\maketitle

\begin{abstract}
In this paper we characterize the set of polynomials $f\in\mathbb F_q[X]$ satisfying the following property: there exists a positive integer $d$ such that for any positive integer $\ell$ less or equal than the degree of $f$, there exists $t_0$ in $\mathbb F_{q^d}$ such that the polynomial $f-t_0$ has an irreducible factor of degree $\ell$ over $\mathbb F_{q^d}[X]$. This result is then used to progress in the last step which is needed to remove the heuristic from one of the quasi-polynomial time algorithms for discrete logarithm problems (DLP) in small characteristic. Our characterization allows a construction of polynomials satisfying the wanted property. The method is general and can be used to tackle similar problems which involve factorization patterns of polynomials over finite fields.
\end{abstract}


\section{Introduction}

For a long time the discrete logarithm problem (DLP) over finite fields has been one of the most important primitives used for cryptographic protocols. The major breakthrough in recent years concerning DLPs in small characteristic consists of the heuristic quasi-polynomial time algorithms given in \cite{Barbulescu2014,granger2015discrete} (see also \cite{EPFL-CONF-215123,joux2013new} for their origins).

In this paper we focus on the algorithm in \cite{granger2015discrete} which only relies on the field representation heuristic (see \cite[p.2]{granger2015discrete}). In fact, if that can be proved, this would show that DLP in small characteristic can indeed be solved in quasi-polynomial time. Our results characterize a class of polynomials which seem to be particularly suitable for performing the quasi-polynomial time DLP-algorithm described in \cite{granger2015discrete} and show that if one wants to select polynomials satisfying the wanted property, these have to be chosen in this class (see Theorem \ref{monodromy}). 

Our constructions involve some Galois theory over function fields, group theory and Chebotarev density theorem. Let us start with the motivating conjecture, which has to be proved in order to remove the remaining heuristic from the algorithm in \cite{granger2015discrete}.

\begin{conjecture}
For any finite field $\vF_q$ and any fixed positive integer $\ell\leq q+2$, there exists an integer $d=O(\log(q))$ and $h_1,h_2\in\vF_{q^d}[X]$ coprime of degree at most $2$ such that $h_1X^q+h_2$ has an irreducible factor of degree $\ell$.
\end{conjecture}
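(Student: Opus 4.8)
The plan is to introduce an auxiliary transcendental and reduce the conjecture, for each fixed $\ell$, to a statement about the monodromy group of a cover of $\vP^1$. Fix $h_1$ of degree at most $2$ and a polynomial $\tilde h_2$ of degree at most $2$, and set $f(X)=h_1(X)X^q+\tilde h_2(X)\in\vF_q[X]$, of degree $n\in\{q,q+1,q+2\}$. For a parameter $t$ I consider $f(X)-t$ over $\vF_q(t)$: a specialization $t\mapsto t_0\in\vF_{q^d}$ for which $f-t_0$ has an irreducible factor of degree $\ell$ produces exactly the pair $(h_1,\ \tilde h_2-t_0)$ demanded by the conjecture, and the coprimality condition $\gcd(h_1,\tilde h_2-t_0)=1$ fails only when $t_0=\tilde h_2(\alpha)$ for a root $\alpha$ of $h_1$, hence for at most $\deg h_1\le 2$ values of $t_0$. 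By Theorem \ref{monodromy} it therefore suffices to exhibit one $f$ of this shape lying in the characterized class, while keeping the field of constants $d=O(\log q)$.

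The engine of the argument is the Chebotarev density theorem for the function field $\vF_q(t)$, which converts factorization patterns of $f-t_0$ into cycle types of Frobenius in the arithmetic monodromy group $M=\Gal\bigl(f(X)-t\ /\ \vF_q(t)\bigr)$ acting on the $n$ roots. Since $f(X)-t$ is linear in $t$, it is irreducible over $\vF_q(t)$ and $M$ is transitive. The specializations $t_0$ for which $f-t_0$ has an irreducible factor of degree $\ell$ correspond to the elements of $M$ whose cycle type contains an $\ell$-cycle, and effective Chebotarev guarantees such a $t_0$ once $q^d$ exceeds a bound governed by $|M|$ and by the genus of the Galois closure. To control $M$ I would compute the ramification of the map $X\mapsto f(X)$: the term $h_1X^q$ forces total ramification over $t=\infty$, producing a long cycle (an $n$-cycle in the tame case) in the inertia there, while the finite branch points are the images of the zeros of $f'(X)=h_1'(X)X^q+\tilde h_2'(X)$, a polynomial of degree at most $q+1$. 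For a generic admissible choice these zeros are simple with distinct images, the associated inertia generators are transpositions, and, $M$ being transitive and generated by them, one concludes $M=S_n$ by the classical fact that a transitive group generated by transpositions is symmetric.

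Here lies the obstacle, and the reason the bound $d=O(\log q)$ is delicate: a large monodromy group is ruinous for the quantitative step. If $M=S_{q+2}$, the Galois closure has genus of size comparable to $|M|\cdot q\sim (q+2)!\,q$, and effective Chebotarev yields an admissible $t_0$ only once $q^{d}$ dominates $|M|^2$, forcing $d$ to grow like $q$ rather than $\log q$. At the opposite extreme, the rigid choices whose monodromy reflects the $\mathrm{PGL}_2(\vF_q)$-structure underlying the field representation keep $|M|\sim q^3$ and the genus small enough that $d=O(1)$ suffices, but such groups realize only cycle lengths dividing $q-1$ or $q+1$, or equal to the characteristic, and hence fail for a general target $\ell$. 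Compounding this, in small characteristic the double ramification points need not be tame, so the transposition picture above breaks down and the monodromy must be extracted through wild (Artin--Schreier-type) inertia. The heart of the problem is thus to engineer $h_1,\tilde h_2$ so that some inertia generator is a cycle of the \emph{exact} prescribed length $\ell$ while keeping the remaining branch locus so sparse that $M$ is an intermediate primitive group of order $q^{O(1)}$ containing that cycle --- for instance an affine or $\mathrm{P\Gamma L}$-type group, for which the Galois-closure genus admits a polynomial-in-$q$ bound.

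To finish, once such a family is produced I would invoke effective Chebotarev to extract an admissible $t_0\in\vF_{q^d}$ with $d=O(\log q)$, discard the at most two values of $t_0$ violating coprimality, and read off $h_2=\tilde h_2-t_0$; a final appeal to Theorem \ref{monodromy} confirms that the constructed polynomials lie in the characterized class. I expect the construction of this intermediate-size monodromy group --- reconciling the demand for a cycle of prescribed length $\ell$ with the logarithmic bound on $d$, and doing so in the presence of wild ramification --- to be by far the hardest step.
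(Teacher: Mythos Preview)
The statement you are attempting to prove is a \emph{conjecture} in the paper, and the paper does not prove it. The authors write explicitly that ``both these conjectures seem to be very hard'' and establish only the relaxed Theorem~\ref{theoremoddh1h2}, in which the bound $d=O(\log q)$ is replaced by the bare existence of some $d\in\vN$. There is thus no ``paper's own proof'' to compare against; your proposal is an attack on an open problem.

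On the substance: you have correctly located the obstruction. The reduction to monodromy plus effective Chebotarev is exactly the machinery the paper deploys for its weaker result, and your observation that $M=\cS_{q+2}$ forces $d$ of order $q$ rather than $\log q$ is precisely why the paper stops where it does. Your proposed way out --- engineering an intermediate primitive group of order $q^{O(1)}$ containing a cycle of each prescribed length $\ell\le q+2$ --- is not a proof but a wish list, and one in tension with the classification of primitive permutation groups: primitive groups of degree $n$ and order polynomial in $n$ (affine, $\mathrm{PSL}$-type, etc.) have severely constrained cycle structure, as you yourself note when you say they ``realize only cycle lengths dividing $q-1$ or $q+1$''. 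You give no indication of how to escape this dichotomy, and neither does the paper.

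There is also an internal inconsistency in your use of Theorem~\ref{monodromy}. That theorem characterizes polynomials with $A_f=G_f=\cS_n$, i.e.\ exactly the large-monodromy regime you then argue must be avoided to obtain $d=O(\log q)$. Your closing appeal to Theorem~\ref{monodromy} to certify that an intermediate-monodromy construction ``lies in the characterized class'' is therefore self-defeating: if the monodromy is not the full symmetric group, Theorem~\ref{monodromy} says $f$ is \emph{not} universal. Since the conjecture concerns a single fixed $\ell$, universality is in any case irrelevant, and that appeal should simply be dropped.
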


If this conjecture is true, then DLP in small characteristic can be solved in non-heuristic quasi-polynomial time as described in the algorithm presented in \cite{granger2015discrete}. 

Such kind of requirement also appeared in \cite[Section 5]{Barbulescu2014}  where it is observed that the choice $h_1=1$ and $h_2=X^2-t_0$ (for some well chosen $t_0\in \vF_{q^d}$) seems to always satisfy the requirements  in odd characteristic and for $d=2$.
This motivates us to formulate the following stronger
\begin{conjecture}\label{strongerconj}
Let $\mathbb F_q$ be a finite field of odd characteristic.
There exists an integer  $d=O(\log(q))$ and $h_1,h_2\in\vF_{q^d}[X]$ coprime of degree at most $2$ such that, for any positive integer $\ell \leq \deg(h_1)+q$  there exists $t_0\in \vF_{q^d}$  such that $h_1X^q+h_2-t_0$ has an irreducible factor of degree $\ell$.
\end{conjecture}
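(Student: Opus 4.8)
The plan is to translate the desired factorization property into a statement about the monodromy of the cover attached to $f_t := h_1 X^q + h_2 - t$, apply the characterization of Theorem \ref{monodromy}, and then realise the resulting group-theoretic condition by an explicit choice of $h_1,h_2$ together with an effective form of the Chebotarev density theorem over the function field $\vF_{q^d}(t)$.

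First I would set up the function-field picture. Regarding $t$ as transcendental over $\vF_q$, the polynomial $f_t$ is the minimal polynomial of $X$ over $\vF_q(t)$, so it is irreducible and the extension $\vF_q(X)/\vF_q(t)$ has degree $n := q + \deg(h_1)$. Its splitting field $L$ carries an arithmetic monodromy group $G = \Gal(L/\vF_q(t))$ and a geometric monodromy group $G^{\mathrm{geom}} = \Gal(L\overline{\vF_q}/\overline{\vF_q}(t))$, both acting transitively on the $n$ roots. The standard dictionary applies: for $t_0 \in \vF_{q^d}$ at which the cover is unramified, the degrees of the irreducible factors of $f_{t_0}$ over $\vF_{q^d}$ are exactly the cycle lengths of the Frobenius substitution $\mathrm{Frob}_{t_0}$ acting on the roots. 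Taking $d$ to be a multiple of the degree of the constant field of $L$, so that the relevant Frobenius coset is all of $G^{\mathrm{geom}}$, the statement ``$f - t_0$ has an irreducible factor of degree $\ell$ for some $t_0$'' becomes ``$G^{\mathrm{geom}}$ contains an element with a cycle of length $\ell$''. This is precisely the reformulation furnished by Theorem \ref{monodromy}, and since there are only finitely many values $\ell \le n$ to treat, it reduces Conjecture \ref{strongerconj} to the following: construct coprime $h_1,h_2$ of degree at most $2$ such that $G^{\mathrm{geom}}$ of $h_1 X^q + h_2 - t$ contains a cycle of every length $\ell \le n$, subject to the effectivity constraint $d = O(\log q)$.

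Second, I would compute, or at least lower-bound, the monodromy group for a well-chosen pair $h_1,h_2$. The essential feature is the wild ramification over the point at infinity coming from the inseparable term $X^q$ (recall $p \mid q$), which forces a large, $p$-divisible higher ramification group and so tends to make $G^{\mathrm{geom}}$ very large. The goal is to choose $h_1,h_2$ making the cover as generic as possible and to prove, via Riemann--Hurwitz together with group-theoretic criteria (primitivity, and Jordan-type theorems on transitive groups containing long cycles), that $G^{\mathrm{geom}}$ is $S_n$ or $A_n$; either contains an $\ell$-cycle for every $\ell \le n$, so the condition of the previous paragraph is met. Analysing the branch locus — the finitely many $t_0$ at which the separable part of $f_t$ degenerates, together with the wild point at infinity — is where the $\mathrm{PGL}_2(\vF_q)$-structure underlying these polynomials should be exploited to pin down the inertia images and hence $G^{\mathrm{geom}}$.

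The main obstacle, and the reason the statement is genuinely delicate, is the effectivity requirement $d = O(\log q)$. The reduction above is an equidistribution statement, and the error term in effective Chebotarev over $\vF_{q^d}(t)$ is governed by the genus $g_L$ of the splitting field: a positive count of admissible $t_0$ is guaranteed only once $q^{d} \gg g_L^2$, i.e. $d \gg \log g_L / \log q$. But to contain a cycle of every length $\ell \le n \approx q$, the group $G^{\mathrm{geom}}$ must have order divisible by $\mathrm{lcm}(1,\dots,n) = e^{(1+o(1))n}$, and, in the presence of the wild point at infinity, Riemann--Hurwitz then forces $g_L \gtrsim |G^{\mathrm{geom}}| \ge e^{(1+o(1))q}$; the naive bound therefore only yields $d = O(q/\log q)$, far from the target. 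Closing this gap is the heart of the difficulty: one cannot afford to realise all the required cycle types through equidistribution in a single enormous group. The most promising route I see is to detect each degree-$\ell$ factor not through the full splitting field but through an $\ell$-specific resolvent cover — associated, say, to the stabiliser of an $\ell$-cycle — of much smaller genus (ideally quasi-polynomial in $q$), and to replace the equidistribution input by a direct point count on that curve, using the special arithmetic of the $X^q$-term to beat the trivial Weil error. Arranging a low-genus surrogate that simultaneously captures the exact-degree-$\ell$ condition and survives the passage to $d = O(\log q)$ is, I expect, the decisive point.
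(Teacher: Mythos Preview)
The statement you are attempting is Conjecture~\ref{strongerconj}, which the paper explicitly leaves open; there is no proof in the paper to compare against. What the paper does prove is the relaxation in Theorem~\ref{theoremoddh1h2}, where $d$ is only required to exist rather than to be $O(\log q)$, and your first two steps reproduce that argument: reduce via Theorem~\ref{monodromy} to showing $A_f=G_f=\mathcal S_n$ for some concrete $h_1,h_2$, then exhibit such a pair. (The paper carries this out for $h_1=1$, $h_2=X^2$ in Section~\ref{sec:Xq+X2-t} and for the family $X^{q+j}-jX$ in Section~5, computing the monodromy via Turnwald's criterion rather than via inertia at infinity as you sketch.) One small correction: $\mathcal A_n$ does \emph{not} suffice --- for instance, when $n$ is odd no element of $\mathcal A_n$ has a cycle of length $n-1$ in its decomposition --- so your parenthetical ``either $S_n$ or $A_n$'' is not quite right, and this is precisely why Theorem~\ref{monodromy} insists on the full symmetric group.

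Your third paragraph correctly isolates the genuine obstruction, and it is the same one the paper flags in the introduction (``the wrinkle is that the required $d$ might in principle be large''). The effective Chebotarev error is governed by the genus of $M_f$, which for $G_f=\mathcal S_n$ with $n\approx q$ is super-exponential in $q$, so the naive bound on $d$ is nowhere near $O(\log q)$. Your proposed workaround via $\ell$-specific resolvent covers is a reasonable direction, but as stated it does not close the gap: the natural resolvents (fixed fields of cycle-stabilisers or Young subgroups) still have index in $\mathcal S_n$ of order $\binom{n}{\ell}$, hence genus still exponential in $q$ for $\ell\sim q/2$. In short, your proposal is an accurate diagnosis of where the difficulty lies and matches the paper's framework for the weaker result, but it is not a proof of the conjecture --- and the paper does not claim one either.
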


A polynomial satisfying Conjecture \ref{strongerconj} will allow to build extensions with the correct representation and of desired degree. 

Both these conjectures seem to be very hard. In this paper we make a step forward by showing a relaxed version of the stronger conjecture: in fact, we will fit the conjecture above in a general framework and will show a characterization of polynomials satisfying a weaker property than the one described in Conjecture \ref{strongerconj}. In particular we will be able to prove the following
\begin{theorem}\label{theoremoddh1h2}
Let $\mathbb F_q$ be a finite field of odd characteristic.
There exists an integer  $d\in \vN$ and $h_1,h_2\in\vF_{q^d}[X]$ coprime of degree at most $2$ such that, for any positive integer $\ell \leq \deg(h_1)+q$  there exists $t_0\in \vF_{q^d}$  such that $h_1X^q+h_2-t_0$ has an irreducible factor of degree $\ell$. Moreover, such polynomials can be constructed explicitly.
\end{theorem}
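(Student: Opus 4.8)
The plan is to reduce the arithmetic statement to a purely group-theoretic condition on the monodromy of the cover defined by $f=h_1X^q+h_2$, and then to exhibit $h_1,h_2$ realizing that condition. Write $n=\deg(f)=\deg(h_1)+q$ (taking $\deg(h_1)=2$, so that $n=q+2$ and $\ell$ ranges up to $n$), and regard $x\mapsto f(x)$ as a degree-$n$ cover $\vP^1_x\to\vP^1_t$. Let $G$ be its geometric monodromy group, i.e. the Galois group of the splitting field of $f(X)-T$ over $\overline{\vF_q}(T)$, and let $A\supseteq G$ be the arithmetic monodromy group over $\vF_q(T)$; both act faithfully and transitively on the $n$ roots, and $A/G$ is cyclic, generated by the Frobenius $\phi$. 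The first step (the general framework, cf. Theorem \ref{monodromy}) is to show, via the function-field Chebotarev density theorem, that the factorization type of $f-t_0$ over $\vF_{q^d}$ is governed by the cycle type of the Frobenius class at $t_0$, which lies in the coset $\phi^d G$; choosing $d$ divisible by $[A:G]$ collapses this coset to $G$, so that as $t_0$ ranges over $\vF_{q^d}$ every conjugacy class of $G$ is realized. Consequently $f$ satisfies the desired property for some $d$ if and only if $G$ contains, for each $\ell\le n$, an element with a cycle of length $\ell$; in particular it suffices to arrange $G=S_n$.

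It therefore remains to construct explicit coprime $h_1,h_2$ of degree at most $2$ for which the geometric monodromy group is the full symmetric group $S_n$. I would do this by a ramification analysis of the cover. Transitivity of $G$ is automatic, since $f(X)-T$ is linear in $T$ and hence irreducible over $\overline{\vF_q}(T)$. Over $t=\infty$ the point $x=\infty$ is totally ramified with index $n$; because $q$ is odd and $\deg(h_1)=2$ we have $p\nmid n$, so this ramification is \emph{tame} and its inertia generator is an $n$-cycle. The finite branch locus is controlled by $f'(X)=h_1'(X)X^q+h_2'(X)$ (the term $qh_1X^{q-1}$ vanishing in characteristic $p$); here I would choose $h_1,h_2$ so that $f'$ has a simple root $\alpha$ over which exactly two sheets come together with ramification index $2$ (tame, as $p$ is odd) and no further ramification, so that the corresponding inertia generator is a \emph{transposition}.

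With a transposition in hand, the conclusion $G=S_n$ follows from Jordan's theorem as soon as $G$ is primitive; and primitivity of the monodromy of a polynomial is equivalent to the indecomposability of $f$ over $\overline{\vF_q}$, which I would verify directly from the degree and shape constraints imposed by $\deg(h_1),\deg(h_2)\le 2$ (the presence of the $n$-cycle further restricts any putative block system). Granting $G=S_n$, every $\ell\le n$ is the length of a cycle of some permutation in $S_n$, so the characterization of the first step applies and yields, after passing to a sufficiently large (but explicit) extension $\vF_{q^d}$, a value $t_0$ for each $\ell\le\deg(h_1)+q$ with $f-t_0$ having an irreducible factor of degree $\ell$. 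Explicitness is immediate because $h_1,h_2$ and $d$ are all produced concretely, the latter being bounded in terms of $[A:G]$ and the Chebotarev error term — note that this bound is not of the form $O(\log q)$, which is exactly why we obtain the relaxed Theorem \ref{theoremoddh1h2} rather than Conjecture \ref{strongerconj}.

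The main obstacle is the monodromy computation in positive characteristic: controlling the ramification of $f'=h_1'X^q+h_2'$ well enough to guarantee a genuine transposition (rather than a longer or wildly ramified inertia element) while simultaneously ensuring indecomposability, all with $h_1,h_2$ constrained to degree $\le 2$. Wild ramification and inseparability can distort the naive picture, so the delicate part is to pin down the inertia groups at the finite branch points and to exclude nontrivial decompositions of $f$; the hypothesis of odd characteristic enters precisely to keep the ramification at infinity and at the chosen simple branch point tame.
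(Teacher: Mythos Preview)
Your overall architecture is exactly the paper's: reduce the statement to the condition $A_f=G_f=\mathcal S_n$ via Theorem~\ref{orbits} and Chebotarev (this is Theorem~\ref{monodromy}), then exhibit $h_1,h_2$ with full symmetric monodromy. The difference, and the place where your sketch has a genuine gap, is the monodromy computation itself.

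You propose to extract a \emph{single} transposition from one simple finite branch point and then invoke Jordan's theorem, which requires primitivity as a separate input. You defer primitivity to an unspecified indecomposability check ``from the degree and shape constraints''. But $n=q+2$ is not prime in general (e.g.\ $q=7,25$), and no feature of the shape $h_1X^q+h_2$ with $\deg h_i\le 2$ rules out a decomposition over $\overline{\vF_q}$ a priori; so as written this step is not complete. You also never commit to specific $h_1,h_2$, which is part of the statement (``can be constructed explicitly'').

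The paper closes both gaps simultaneously by quoting Turnwald's criterion (Theorem~\ref{galoisgroupturnwald}): if $p\nmid\deg f$, $f'$ has a simple root, and $f$ separates the roots of $f'$, then the Galois group of $f-t$ is already $\mathcal S_n$, with no primitivity hypothesis needed. Geometrically this is your ramification picture pushed further: the distinct-critical-values condition yields not one but $n-1$ transpositions among the inertia generators, and a transitive group \emph{generated} by transpositions is $\mathcal S_n$ outright. The paper then takes $h_1=X^2$, $h_2=-2X$, so $f=X^{q+2}-2X$; here $f'=2(X^{q+1}-1)$ has simple roots $\zeta$ with $f(\zeta)=-\zeta$, so Turnwald's hypotheses are immediate. (For the alternative choice $h_1=1$, $h_2=X^2$, one has $p\mid\deg f$ and the paper instead passes to the stabilizer of one root, where Turnwald applies to $(X^{q-1}+X)/(-2)$ over $\vF_q(x)$ and an orbit--stabilizer count finishes.) Either route supplies the explicit polynomials and the $G_f=\mathcal S_n$ conclusion that your outline leaves open.
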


More in general, we characterize completely (in any characteristic) polynomials $f\in \vF_q[X]$ having the property that there exists a $d\in \vN$ such that for any $\ell\leq \deg(f)$, there exists $t_0\in \vF_{q^d}$ such that $f-t_0$ has an irreducible factor of degree $\ell$ in $\vF_{q^d}[X]$.

On the theoretical side, our result shows the existence of such $d$ for a certain class of polynomials, which is the first step in the attempt of giving an explicit bound. In practice, our methods are constructive and they allow to build new families of polynomials (see for example the constructions in subsection \ref{xqjjx}) which always satisfy the wanted requirements. Even though we can show the existence of such $d$ for these families of polynomials, the wrinkle is that the required $d$ might in principle be large (but in practice, if one follows our recipe, this seems to be never the case).  

In a nutshell, what we will do in this paper is to solve the geometric part of the problem connected with the two conjectures above and what remains to do to completely remove the heuristic is to  give an explicit logarithmic bound for $d$ for at least one polynomial in our families.

The key idea of the method is the following. We look at the problem in a function field theoretical framework, explaining that the factorization conditions can be translated into group theoretical properties of the Galois closure of a certain extension $L:K$ of global function fields. Then, we use the rigidity of group theory to determine the Galois group that can occur for the polynomials we are interested in. Finally, Chebotarev Density Theorem for global function fields will ensure that, for any fixed element $\gamma$ in the Galois group, there exists an unramified place $P$ of $K$ for which the cycle decomposition of $\gamma$ (when you look at its action on a certain set of homomorphisms) appears exactly as the splitting of $P$ in $L$. 

The paper is structured as follows. In Section \ref{sec:dunigalois} we recap the basic tools we need from algebraic number theory and group theory. In Section \ref{sec:characterizationofuni} we characterize the monodromy groups of the class of polynomials we are interested in. In Section \ref{sec:Xq+X2-t} we specialize to the polynomial $X^q+X^2$ and compute its monodromy group, showing that for odd $q$, it is indeed the full symmetric group. In Section $5$ we show other examples of polynomials of the wanted form that have symmetric monodromy group.

\subsection{Notation}
For the entire paper $p$ is a prime (even or odd) and $q=p^a$ for some positive integer $a$. Let $k:=\vF_q$ be the finite field of order $q$. Let $f\in k[X]\setminus k[X^p]$. Let $M_f$ be the splitting field of $f-t$ over $k(t)$, which is a separable extension of $k(t)$. Let $\tilde k$ be the field of constants of $M_f$ i.e. the integral closure of $k$ in $M_f$. Let $A_f=\Gal(M_f:k(t))$ be the \emph{arithmetic monodromy group} of $f$ and  $G_f=\Gal(M_f:\tilde{k}(t))\trianglelefteq A_f$ be the \emph{geometric monodromy group} of $f$.
Let $\mathcal S_n$ be the symmetric group of degree $n$. Notice that if $F_1,F_2$ are subfields of a larger field $F$, we denote by $F_1F_2$ the the compositum of $F_1$ and $F_2$.
Let $G$ be a group acting on a set $Y$. For any $y\in Y$ we denote by $\St_G(y)$ the stabilizer of $y$ in $G$.

\section{$d$-universal polynomials and Galois Theory over Function Fields}\label{sec:dunigalois}
In this section we define the notion of universal polynomial and state the basic results from global function field theory we will be using in the rest of the paper.
\begin{definition}
Let $f\in \vF_q[X]$. We say that $f$ is $d$-\emph{universal} for some positive integer $d$ if for any positive integer $\ell\leq\deg(f)$, there exists $t_0$ in $\vF_{q^d}$ such that $f(X)-t_0$ has an irreducible factor of degree $\ell$. We say that $f$ is \emph{universal} if it is $d$-universal for some $d$.
\end{definition}

\begin{remark}
In this notation, in \cite{Barbulescu2014} it is suggested that $X^q+X^2$ is $2$-universal for any odd $q$ (see section \emph{Finding appropriate $h_0$, $h_1$} of \cite{Barbulescu2014}).
\end{remark}

In what follows we will use notation and terminology of \cite{stichtenoth2009algebraic}. First, we need a classical result from algebraic number theory, which will be used to transfer splitting conditions of places into into group theoretical properties of a certain Galois group.
\begin{theorem}\label{orbits}
Let $L:K$ be a finite separable extension of global function fields and let $M$ be its Galois closure with Galois group $G$. Let $P$ be a place of $K$ and $\mathcal Q$ be the set of places of $L$ lying over $P$.
Let $R$ be a place of $M$ lying over $P$. There is a natural bijection between $\mathcal Q$ and the set of orbits of $H=\Hom_K(L,M)$ under the action of the decomposition group $D(R|P)=\{g\in G\,|\, g(R)=R\}$.
In addition, let $Q\in \mathcal Q$ and let $H_Q$ be the orbit corresponding to $Q$. Then $|H_Q|=e(Q|P)f(Q|P)$ where $e(Q|P)$ and $f(Q|P)$ are ramification index and relative degree respectively.
\end{theorem}
A proof of Theorem \ref{orbits} can be found for example in \cite{guralnick2007exceptional}.
For a finite Galois extension of function fields $M:K$ with Galois group $G$, let $P$ be a degree $1$ place of $K$ and $R$ be a place of $M$ lying over $P$. Let $\phi$ be the topological generator of $\Gal(\overline{k}:k)$ defined by $y\mapsto y^q$.
Let $k_R$ be the residue field at $R$ and let $\phi_R$ be the image of $\phi$ in $\Gal(k_R:k)$. If $(R,M:K)$ is the set of elements in $D(R|P)$ mapping to $\phi_R$, we denote by $(P,M:K)$ the set $\{gxg^{-1}: \; g\in G, x\in (R,M:K)\}$.

We are now ready to state the other fundamental tool, which can easily be adapted from \cite{kosters2014short}.
\begin{theorem}[Chebotarev Density Theorem]\label{handlechgeneral}
Let $M:K$ be a finite Galois extension of function fields over a finite field $k$ of cardinality $q$ and let $\tilde k$ be the constant field of $M$. Let $A=\Gal(M:K)$ and $G=\Gal(M:\tilde k K)$. Let $\gamma\in A$ such that $\gamma$ acts as $u\mapsto u^q$ when restricted to $\tilde k$. Let $g\in G\gamma$, $\Gamma$ be the conjugacy class of $g$ and let $S_K$ be the set of places in $K$ which are unramified in $M$. Then we have
\[|\{P\in S_K| \deg_k(P)=1, (P,M:K)=\Gamma\}|=\frac{|\Gamma|}{|G|}q+2\frac{|\Gamma|}{|G|}
\mathfrak{g}_M q^{1/2}\]
where  $\mathfrak{g}_M$ is the genus of $M$.
\end{theorem}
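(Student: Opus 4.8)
The plan is to deduce the statement from the effective Chebotarev estimate of \cite{kosters2014short}, the additional work being essentially the bookkeeping forced by the constant-field extension $\tilde k : k$. First I would explain why only conjugacy classes contained in the coset $G\gamma$ can arise. If $P$ is a degree-one place of $K$ unramified in $M$ and $R\mid P$, then the decomposition group $D(R|P)$ is cyclic, generated by a Frobenius element $\sigma_R$; since the residue field of $P$ is exactly $k=\vF_q$, the restriction of $\sigma_R$ to the constant field $\tilde k$ is the $q$-power map $u\mapsto u^q$, that is $\sigma_R\in G\gamma$. Hence $(P,M:K)$ is an $A$-conjugacy class lying inside $G\gamma$, which is precisely why the density must be measured relative to $|G\gamma|=|G|$ rather than to $|A|$, and why the hypothesis $g\in G\gamma$ is the right one.

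Next I would bring in the Artin $L$-functions $L(u,\chi)$ attached to the irreducible characters $\chi$ of $A=\Gal(M:K)$, with $u=q^{-s}$. Each is a polynomial whose reciprocal roots have absolute value $q^{1/2}$ by the Riemann Hypothesis for function fields \cite{stichtenoth2009algebraic}, and whose degrees are controlled by the genus: across the regular representation the relevant reciprocal roots number at most $2\mathfrak{g}_M$. Taking logarithmic derivatives $-u\,\frac{d}{du}\log L(u,\chi)$ produces generating series whose $u^n$-coefficient records, weighted by $\chi$, the unramified places of $K$ of degree dividing $n$; combining these with the column-orthogonality relations for the characters of $A$, restricted to the single coset $G\gamma$ in which $\Gamma$ lives, isolates the indicator of the event $(P,M:K)=\Gamma$.

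Reading off the coefficient of $u^1$ then splits the count into the trivial-character contribution, which yields the main term $\frac{|\Gamma|}{|G|}q$, and the contribution of the nontrivial characters, which is a sum over the remaining reciprocal roots. Since each such root has modulus $q^{1/2}$ and there are at most $2\mathfrak{g}_M$ of them, the nontrivial contribution is bounded by $2\frac{|\Gamma|}{|G|}\mathfrak{g}_M q^{1/2}$, which is exactly the error term displayed in the statement. Equivalently, and more quickly, one may apply the theorem of \cite{kosters2014short} directly to the \emph{geometric} extension $M:\tilde k K$, count the places of $\tilde k K$ of degree one with prescribed geometric Frobenius, and then translate the answer back to rational places of $K$ through the fixed representative $\gamma$ of the coset.

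The step I expect to be delicate is precisely this constant-field translation: one must verify that the dictionary between rational places of $K$ and places of $\tilde k K$, together with the compatibility of the Artin symbols across the three groups $A$, $G=\Gal(M:\tilde k K)$ and $\Gal(\tilde k:k)$, reproduces the density $|\Gamma|/|G|$ without introducing spurious factors of $[\tilde k:k]$, and that the genus entering the error is that of $M$ and not that of $K$ or of $\tilde k K$. Checking that each unramified rational place is counted with the correct multiplicity, and that $\Gamma$ genuinely sits inside a single coset $G\gamma$ so that the averaging is carried out over $G$, is where the adaptation from \cite{kosters2014short} requires the most care; once this dictionary is fixed, the estimate follows at once from Kosters' result and the Weil bound.
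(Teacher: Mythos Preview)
The paper does not actually supply a proof of this theorem; it only states the result and remarks that it ``can easily be adapted from \cite{kosters2014short}.'' So there is no argument in the paper to compare against beyond that citation. Your proposal is precisely the kind of adaptation the paper is gesturing at: you reduce to Kosters' effective Chebotarev by handling the constant-field extension $\tilde k:k$, observing that Frobenius at a degree-one place of $K$ must land in the coset $G\gamma$, and then invoking the Weil bound on the reciprocal roots of the Artin $L$-functions (equivalently, on the numerator of the zeta function of $M$, which has degree $2\mathfrak g_M$). That is the standard route and your identification of the delicate point---checking that the density comes out as $|\Gamma|/|G|$ rather than $|\Gamma|/|A|$, and that no stray factor of $[\tilde k:k]$ appears---is exactly right.

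One remark worth making explicit in your write-up: the displayed formula in the statement is written as a literal equality, but of course the second term is an \emph{error bound}, not an exact quantity; your sketch already treats it that way (``the nontrivial contribution is bounded by\ldots''), and you should say so plainly when you formalize the argument.
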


Theorem \ref{handlechgeneral} combined with Theorem \ref{orbits} is used to push group theoretical information to splitting statistics: the key fact is that the number of elements in the Galois group with a certain cycle decomposition (and in the correct coset of the geometric Galois group) determines the statistics of the unramified places that split according to the given cycle decomposition. 
Let us give an example that clarifies the procedure for the class of extensions we are interested in. Let $f$ be a polynomial of degree $n\geq 6$ in $\vF_q[X]$ and consider the polynomial $f-t\in \vF_q(t)[X]$. Set $L=\vF_q(x)=\vF_q(t)[x]/(f(x)-t)$, $K=\vF_q(t)$, and $M_f$ as in the notation section (i.e. the Galois closure of $L:K$). Observe first that $\Hom_K(L,M_f)$ is in natural correspondence with the roots of $f(x)-t$ in $M_f$, and in turn the action of $A_f$ on $\Hom_K(L,M_f)$ is equivalent to the action of $A_f$ on the roots of $f-t$. 
Now, suppose for example we want to know an estimate for the number of $t_0$'s in $\vF_q$ such that $f-t_0$ splits into two degree $2$ irreducible factors and a degree $n-4$ irreducible factor in $\vF_q[x]$. Let now $\gamma\in A$ be the Frobenius (i.e. $x\mapsto x^q$) for the field $k_f$.
Take now the coset $G_f\gamma$ and take the set $Z$ of all elements in $G_f\gamma$ with disjoint  cycle decomposition 
\[(-,-)(-,-)\underbrace{(-,\dots,-)}_{n-4}\] 
when you look at their action on the roots of $f-t$.
Notice that $Z\subseteq G_f\gamma$ is a union of $A_f$-conjugacy classes, as $A_f/G_f$ is cyclic. Applying Chebotarev for each of the conjugacy classes and adding the estimates together gives that the number of $t_0\in \vF_q$ such that $f-t_0$ has the wanted factorization pattern is then $q|Z|/|G_f| +O(\sqrt{q})$, where the implied constant can be chosen independent of $q$.

In what follows we will only need the following special version of Chebotarev density theorem,  which can be also derived from \cite{rosen2013number}.

\begin{theorem}[Chebotarev Density Theorem with trivial constant field extension]\label{handlecheb}
Let $M:K$ be a finite Galois extension of function fields over a finite field $k$ of cardinality $q$. Let $G=\Gal(M:K)$ and assume that the field of constants of $M$ is exactly $k$. 
Let $\Gamma$ be a conjugacy class of $G$ and let $S_K$ be the set of places in $K$ which are unramified in $M$. Then we have
\[|\{P\in S_K| \deg_k(P)=1, (P,M:K)=\Gamma\}|=\frac{|\Gamma|}{|G|}q+O\left(q^{1/2}\right).\] 
\end{theorem}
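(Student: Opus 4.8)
The plan is to deduce Theorem \ref{handlecheb} as the specialization of the general statement, Theorem \ref{handlechgeneral}, to the case of trivial constant field extension; one could alternatively quote the effective form directly from \cite{rosen2013number}, but deriving it from Theorem \ref{handlechgeneral} keeps the notation of this paper uniform and makes the error term explicit.

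First I would record the structural simplification forced by the hypothesis that the field of constants of $M$ is exactly $k$. In the notation of Theorem \ref{handlechgeneral} this means $\tilde k=k$, so $\tilde k K=K$ and therefore
\[
G=\Gal(M:\tilde k K)=\Gal(M:K)=A .
\]
Hence the arithmetic and geometric monodromy groups coincide and there is a single group $G$ in play.

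Next I would dispose of the Frobenius condition attached to $\gamma$. The topological generator $\phi$ of $\Gal(\overline k:k)$ acts by $y\mapsto y^q$, and its restriction to $\tilde k=k=\vF_q$ is the identity, since every element of $\vF_q$ is fixed by $u\mapsto u^q$. Thus the requirement in Theorem \ref{handlechgeneral} that $\gamma$ restrict to $u\mapsto u^q$ on $\tilde k$ holds for every element of $A$; in particular I may take $\gamma=\mathrm{id}_M$, so that the coset $G\gamma$ is all of $G$. Given an arbitrary conjugacy class $\Gamma$ of $G$ as in Theorem \ref{handlecheb}, I then pick any $g\in\Gamma\subseteq G=G\gamma$, which places the data exactly in the hypotheses of Theorem \ref{handlechgeneral}.

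Finally I would apply Theorem \ref{handlechgeneral} and absorb its error term. With the identifications above its conclusion reads
\[
\bigl|\{P\in S_K\mid \deg_k(P)=1,\ (P,M:K)=\Gamma\}\bigr|=\frac{|\Gamma|}{|G|}q+2\frac{|\Gamma|}{|G|}\mathfrak{g}_M\,q^{1/2}.
\]
Because the extension $M:K$ is fixed, the genus $\mathfrak{g}_M$ is a constant and $|\Gamma|/|G|\leq 1$, so the second summand is $O(q^{1/2})$ with implied constant depending only on $M:K$; this is the claimed estimate. Since the proof is a direct specialization, there is no serious obstacle: the only points needing care are the two observations that triviality of the constant field forces $A=G$ and renders the Frobenius restriction automatic. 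The one place where real work would resurface is in making the $O(q^{1/2})$ uniform across a family of extensions rather than for a single fixed $M:K$; there one would have to bound $\mathfrak{g}_M$, for instance via the Riemann--Hurwitz formula, which is unnecessary for the fixed-extension statement quoted here.
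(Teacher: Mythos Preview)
Your proposal is correct and matches the paper's treatment: the paper does not give a standalone proof of Theorem~\ref{handlecheb} but simply introduces it as the special version of Theorem~\ref{handlechgeneral} (alternatively citing \cite{rosen2013number}), and your argument is precisely that specialization---observing that $\tilde k=k$ forces $A=G$ and makes the Frobenius-restriction condition on $\gamma$ vacuous, so Theorem~\ref{handlechgeneral} applies to any conjugacy class $\Gamma\subseteq G$. If anything, you supply more detail than the paper does.
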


The following easy lemma simplifies some of the proofs of the results in this paper.
\begin{lemma}\label{compositumfinitefield}
Let $f$ be a separable polynomial, let $k'$ be an extension of $k$, and $\tilde k':=k'\cap \tilde k$. Then
\[\Gal(k'M_f:k'(t))\cong\Gal(M_f:\tilde k'(t)).\]
\end{lemma}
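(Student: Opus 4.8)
The plan is to reduce the statement to a computation of the intersection $M_f\cap k'(t)$ by means of the standard description of the Galois group of a compositum (the theorem on natural irrationalities). Since $f$ is separable, $M_f$ is a finite Galois extension of $k(t)$; because $[M_f:k(t)]<\infty$, its constant field $\tilde k$ is a finite (hence Galois) extension of $k$, and consequently $\tilde{k}':=k'\cap\tilde k$ is a finite extension of $k$ as well. As $t\in M_f$, the compositum of $M_f$ and $k'(t)$ equals $k'M_f$, so applying the restriction isomorphism for the compositum of the Galois extension $M_f/k(t)$ with the extension $k'(t)/k(t)$ yields
\[
\Gal(k'M_f:k'(t))\;\cong\;\Gal\bigl(M_f:M_f\cap k'(t)\bigr),
\]
the isomorphism being $\sigma\mapsto\sigma|_{M_f}$. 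Thus it suffices to show that $M_f\cap k'(t)=\tilde{k}'(t)$.

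First I would establish the inclusion $\tilde{k}'(t)\subseteq M_f\cap k'(t)$: indeed $\tilde{k}'\subseteq\tilde k\subseteq M_f$ and $\tilde{k}'\subseteq k'\subseteq k'(t)$, and since $t$ lies in both fields this gives $\tilde{k}'(t)\subseteq M_f\cap k'(t)$. For the reverse inclusion I would use that $N:=M_f\cap k'(t)$ is an intermediate field of the constant field extension $k'(t)/k(t)$. Since $k'/k$ is separable, every intermediate field of $k'(t)/k(t)$ has the form $k''(t)$ for some $k\subseteq k''\subseteq k'$, because the automorphisms of $k'(t)/k(t)$ fix $t$ and act only on the constants. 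Hence $N=k''(t)$, and its constant field $k''$ must be contained both in the constant field $\tilde k$ of $M_f$ and in the constant field $k'$ of $k'(t)$; therefore $k''\subseteq k'\cap\tilde k=\tilde{k}'$, so $N=k''(t)\subseteq\tilde{k}'(t)$. Combining the two inclusions gives $M_f\cap k'(t)=\tilde{k}'(t)$, which together with the displayed isomorphism proves the lemma.

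The only delicate point is the classification of the intermediate fields of $k'(t)/k(t)$, which I expect to be the main (though mild) obstacle; it is handled by the observation that a constant field extension of a rational function field is determined by its subfield of constants, since the relative Galois group acts trivially on $t$. For the applications in this paper one has $k'=\vF_{q^d}$, so $k'/k$ is cyclic and this classification is immediate from finite Galois theory applied to $\Gal(k'(t):k(t))\cong\Gal(k':k)$.
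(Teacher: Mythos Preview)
Your proof is correct and follows essentially the same route as the paper: both arguments hinge on the identity $M_f\cap k'(t)=\tilde{k}'(t)$, after which the paper invokes the product decomposition $\Gal(k'M_f:\tilde{k}'(t))\cong\Gal(M_f:\tilde{k}'(t))\times\Gal(k'(t):\tilde{k}'(t))$ while you apply the restriction isomorphism (natural irrationalities) directly---these are equivalent formulations. Your write-up is in fact more thorough on the intersection step, which the paper simply asserts.
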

\begin{proof}
First we observe that if $F_1=M_f$ and $F_2=k'(t)$, then $F_1\cap F_2=\tilde k'(t)$. In addition, we know the Galois group of the compositum:
\[\Gal(F_1F_2: F_1\cap F_2)=\Gal(k'M_f:\tilde k'(t))\cong\Gal(M_f:\tilde k'(t))\times\Gal(k'(t):\tilde k'(t))\]
where the isomorphism is defined by the restriction map to $M_f$ and $k'(t)$. It follows easily that 
\[\Gal(k'M_f:k'(t))\cong\Gal(M_f:\tilde k'(t)).\]
\end{proof}
 
\subsection{Short Group Theory Interlude}

\begin{definition}
Let $X$ be a finite set and $G$ be a finite group. An action of $G$ on $X$ is said to be \emph{non-primitive} if there exists an integer $\ell\in \{2,\dots ,|G|-1\}$ and a partition of $X$ into $X_1,\dots X_\ell$ such that for any $i\in \{1,\dots, \ell\}$ and any $g\in G$ we have $g(X_i)=X_{i_g}$ for some $i_g\in \{1,\dots,\ell\}$.
An action is said to be \emph{primitive} if is not non-primitive. 
\end{definition}

Roughly, the above definition states that an action of a group $G$ on a set $X$ is primitive if it does not preserve any non-trivial partition of $X$. We will also need the following group theory lemma, of which we include the proof for completeness.

\begin{lemma}\label{primitivegroup}
Let  $G$ be a subgroup of $\cS_n$ acting on $U=\{1,\dots,n\}$.
Suppose that $G$ acts transitively on $U$ and it contains a cycle of prime order $r$ with $r>n/2$. Then $G$ acts primitively on $U$.
\end{lemma}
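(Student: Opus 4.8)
The plan is to argue by contradiction: I would assume that $G$ acts non-primitively on $U$ and derive a contradiction with the existence of the prime-order cycle. First I would record the standard structural fact for transitive actions: if $G$ is transitive and preserves a partition $\{X_1,\dots,X_\ell\}$ of $U$ into $\ell\geq 2$ proper blocks, then all the $X_i$ have a common size $m$ with $\ell m=n$, and non-triviality forces $2\leq m< n$, hence $m\leq n/2$ and $\ell=n/m\leq n/2$. The equal-size claim is the only place transitivity enters: given two blocks, a group element carrying a point of one into the other must carry the whole block onto the other (since $G$ maps blocks to blocks and blocks are disjoint), so $\abs{X_i}=\abs{X_j}$.

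Next, let $\sigma\in G$ be the given cycle of prime order $r$, and write $\Sigma$ for its support, so that $\abs{\Sigma}=r$. Since $\langle\sigma\rangle$ permutes the $\ell$ blocks and has prime order $r$, every $\sigma$-orbit on the set of blocks has length $1$ or $r$. The hypothesis $r>n/2\geq\ell$ rules out orbits of length $r$, so I conclude that $\sigma$ fixes each block setwise.

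The heart of the argument is then an invariance step. For each block $B$, the intersection $B\cap\Sigma$ is $\sigma$-invariant, and because $\sigma$ acts as a single $r$-cycle on $\Sigma$ (hence transitively), the only $\sigma$-invariant subsets of $\Sigma$ are $\emptyset$ and $\Sigma$. As $r=\abs{\Sigma}>n/2\geq m=\abs{B}$, the possibility $B\cap\Sigma=\Sigma$ is excluded, leaving $B\cap\Sigma=\emptyset$ for every block $B$. Summing over the blocks, which cover $U$, gives $\Sigma=\emptyset$, contradicting $r\geq 2$. This contradiction shows that no non-trivial $G$-invariant partition exists, i.e. $G$ acts primitively.

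The point to be careful about — what I would regard as the main (if modest) obstacle — is that the single hypothesis $r>n/2$ has to deliver two numerical inequalities at once: $r>\ell$, so the blocks cannot be permuted non-trivially, and $r>m$, so the support cannot sit inside a single block. Both follow from $r>n/2$ via $m\geq 2\Rightarrow\ell\leq n/2$ and $\ell\geq 2\Rightarrow m\leq n/2$, and verifying that these are exactly the quantitative inputs needed is the crux; everything else is the routine block-system machinery.
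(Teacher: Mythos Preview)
Your proof is correct and follows essentially the same route as the paper's: both argue by contradiction from a nontrivial block system, use that the length of a $\sigma$-orbit on blocks divides the prime $r$, and then rule out both possible orbit lengths via the two inequalities $r>n/2\geq\ell$ and $r>n/2\geq m$. The only cosmetic difference is that the paper focuses on a single block $X_j$ meeting $\supp(\sigma)$ and excludes $v=1$ and $v=r$ for that block, whereas you first show globally that $\sigma$ fixes every block and then derive $\Sigma=\emptyset$; the underlying idea and the numerical inputs are identical.
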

\begin{proof}
Let $X_1\sqcup X_2\sqcup \dots \sqcup X_\ell$ be a system of imprimitivity. This is the partition induced by a non trivial equivalence relation $\sim$ which is $G$-invariant (i.e. $x\sim y$ implies $gx\sim gy$).
Since $G$ acts transitively, we recall that $|X_i|=|X_1|$ for all $i\in\{1,\dots \ell\}$.
We argue by contradiction, by assuming $1<|X_1|<n$.
Consider now the cycle $\sigma$ of order $r$ and take $X_j$ which intersects the support of $\sigma$ (i.e. $\sigma$ acts non trivially on $X_j$).
Consider the orbit of $X_j$ via $\sigma$:
\[X_j,\sigma(X_j),\dots, \sigma^{v-1}(X_j),\]
where $v$ is the orbit of $X_j$ via $\sigma$.
We have that $v$ necessarily divides $r$. Then both $v=1$ and $v=r$ are impossible.
\end{proof}


\section{A characterization of universal polynomials}\label{sec:characterizationofuni}

We are now ready to prove the main result.

\begin{theorem}\label{monodromy}
Let $f\in \vF_q[X]$. Suppose that $n=\deg(f)\geq 8$, then $f$ is universal if and only if $A_f=G_f=\mathcal S_{n}$.
\end{theorem}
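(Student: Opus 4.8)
The plan is to prove both implications through the dictionary of Theorem \ref{orbits}: for an unramified degree-$1$ place $t=t_0$ of $\vF_{q^d}(t)$, the degrees of the irreducible factors of $f-t_0$ over $\vF_{q^d}$ are exactly the lengths of the cycles of the corresponding Frobenius element acting on the $n$ roots of $f-t$. For the easy direction, assume $A_f=G_f=\mathcal{S}_n$; then the constant field $\tilde k$ equals $k$, so by Lemma \ref{compositumfinitefield} the monodromy group over $\vF_{q^d}$ is still $\mathcal{S}_n$ with trivial constant field. For each $\ell\le n$ I would take $\Gamma$ to be the class of an $\ell$-cycle with $n-\ell$ fixed points and invoke Theorem \ref{handlecheb}: the count of admissible $t_0\in\vF_{q^d}$ is $\tfrac{|\Gamma|}{n!}q^d+O(q^{d/2})$, with implied constant governed by the genus of $M_f$, which does not change under constant field extension and hence is independent of $d$. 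Choosing $d$ large enough uniformly over the finitely many values $\ell\le n$ makes the main term positive and produces the desired factor of degree $\ell$.

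For the converse, I would first record that $f-t$ is irreducible over $\vF_q(t)$ and, as $\vF_q(x)$ is a rational and therefore geometrically irreducible function field, it stays irreducible over $\tilde k(t)$; thus both $A_f$ and $G_f$ act transitively on the $n$ roots. Fix a $d$ witnessing universality. By the Chebotarev setup of Theorem \ref{handlechgeneral}, every value $t_0\in\vF_{q^d}$ contributes a Frobenius lying in one fixed coset $C=G_f\gamma$ of $G_f$ inside the arithmetic monodromy group over $\vF_{q^d}$, itself a subgroup of $A_f$. Universality then translates into the purely group-theoretic statement that for each $\ell\le n$ the coset $C$ contains an element with a cycle of length $\ell$.

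The next step is to force the symmetric group. Using Bertrand's postulate (after checking the small cases, which is where the hypothesis $n\ge 8$ enters) I would pick a prime $p$ with $n/2<p\le n-3$. An element of $C$ with a $p$-cycle has its remaining $n-p<p$ points in cycles of length coprime to $p$, so a suitable power is a pure $p$-cycle in $A_f$; together with transitivity, Lemma \ref{primitivegroup} makes $A_f$ primitive, and Jordan's theorem (a primitive group of degree $n$ containing a $p$-cycle fixing $n-p\ge 3$ points contains $A_n$) gives $A_f\supseteq A_n$. Since $G_f\trianglelefteq A_f$ is transitive, this also yields $G_f\supseteq A_n$, so $G_f\in\{A_n,\mathcal{S}_n\}$. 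To decide between them I would use a parity argument: the witnesses for $\ell=n$ and $\ell=n-1$ place in $C$ an element of cycle type $(n)$ and one of type $(n-1,1)$, whose signs $(-1)^{n-1}$ and $(-1)^n$ are opposite. Hence $C$ meets both cosets of $A_n$, which is impossible when $G_f\subseteq A_n$ (then $C=G_f\gamma$ would lie in a single $A_n$-coset); therefore $G_f=\mathcal{S}_n$, and $A_f=\mathcal{S}_n$ follows from $G_f\subseteq A_f\subseteq\mathcal{S}_n$.

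The main obstacle is this converse, and specifically transferring information from the coset $C$, where universality naturally lives, to the groups $A_f$ and $G_f$ themselves. The prime-cycle extraction resolves this for primitivity and for Jordan's theorem, but the genuinely delicate point is distinguishing $\mathcal{S}_n$ from $A_n$ for the arithmetic and the geometric group simultaneously; the clean resolution is that the two extreme factorization patterns $\ell=n$ and $\ell=n-1$ automatically deposit opposite-parity elements in the same coset $C$, forcing $G_f\not\subseteq A_n$ in one stroke. A minor technical point, worth stating but not hard, is that the witnessing places must be unramified for Theorem \ref{orbits} to apply, which holds because the factorizations involved are squarefree and $f\notin\vF_q[X^p]$ keeps $f-t_0$ separable.
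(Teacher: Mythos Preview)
Your argument is correct and follows the same architecture as the paper's proof: Bertrand's postulate to locate a prime $r$ with $n/2<r\le n-3$, extraction of an $r$-cycle, Lemma~\ref{primitivegroup} plus Jordan's theorem to force $A_f\supseteq\mathcal A_n$, and then the witnesses for $\ell=n$ and $\ell=n-1$ to pin down $\mathcal S_n$. The easy direction via Theorem~\ref{handlecheb} is also handled identically.

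Where you genuinely diverge from the paper is in the passage from $A_f\supseteq\mathcal A_n$ to $G_f=\mathcal S_n$. The paper first shows $A_f'=\mathcal S_n$, lifts this to $A_f=\mathcal S_n$ via Lemma~\ref{compositumfinitefield}, and then rules out $\tilde k=\vF_{q^2}$ by a case split on the parity of $d$, using that $|D(R|P)|=f(R|P)$ must be divisible by $[\vF_{q^{2d}}:\vF_{q^d}]=2$ while the cycle structure forces it to equal $n$ or $n-1$. Your route is shorter: you observe that $G_f$ is transitive (since $\vF_q(x)$ has exact constant field $\vF_q$), hence $G_f\supseteq\mathcal A_n$ by normality, and then note that the Frobenius witnesses for $\ell=n$ and $\ell=n-1$ lie in the \emph{same} coset $G_f\gamma$ but have opposite sign, so $G_f\not\subseteq\mathcal A_n$. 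This parity-in-a-coset argument replaces the paper's entire constant-field discussion in one line; the paper's approach, on the other hand, makes the role of the constant field more visible and does not need to invoke transitivity of $G_f$ separately.

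One small caution on your ``minor technical point'': for $\ell=n$ and $\ell=n-1$ the factorisations are automatically squarefree (an irreducible factor of degree $n-1$ cannot share a root with the leftover linear factor), so those two places are unramified and your coset/parity argument is clean. For the prime $r$, however, the residual degree-$(n-r)$ cofactor may well have repeated roots, so $f-t_0$ need not be squarefree and $P$ may ramify in $M_f$; your blanket claim that ``$f\notin\vF_q[X^p]$ keeps $f-t_0$ separable'' is not correct for specific $t_0$. This does not damage your proof, because you only need the resulting $r$-cycle to lie in $A_f$ (not in the coset $C$), and one still obtains it exactly as the paper does: since $e(Q|P)=1$ for the place $Q$ corresponding to the degree-$r$ factor, the inertia group $I(R|P)$ acts trivially on the orbit $H_Q$, so any lift of Frobenius in $D(R|P)$ already acts as an $r$-cycle there.
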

\begin{proof}
First, let us assume that $f$ is $d$-universal for some positive integer $d$. Consider first 
\[A_f'=\Gal(\vF_{q^d}M_f:\vF_{q^d}(t))\leq \mathcal S_n.\] 
Let $x$ be any zero of $f(X)-t$ over $\overline{\vF_q(t)}$. From now on, we will look at $A_f'$ as a subgroup of the permutation group of the roots of $f(X)-t$ (or equivalently of the set $H=\Hom_{\vF_{q^d}(t)}(\vF_{q^d}(x),\vF_{q^d} M_f)$). Our first purpose is indeed to show that $A_f'=\cS_n$.

Let $r$ be a prime in $\{\lfloor\frac{n}{2}\rfloor+1,\dots,n-3\}$. Such prime always exists by Bertrand Postulate (also known as Chebyshev's Theorem).
Fix now $t_0\in\vF_{q^d}$ in such a way that $f(X)-t_0$ has an irreducible factor $h(X)$ of degree $r$ (over $\vF_{q^d}[X]$). This implies  immediately  that the ramification at $t_0$ is one, as $h(X)^e$ would have degree larger than $n$ for any $e>1$.
We claim that there exists $\gamma\in A_f'$ which is a cycle of order $r$. 
Let $P$ be the place corresponding to $t_0$, $Q$ be the place of 
$\vF_{q^d}(x)$ corresponding to the irreducible factor of degree $r$  lying over $P$, and $R$ be a place of $\vF_{q^d}M_f$ lying over $Q$.
Let $g\in D(R|P)$ be such that its image in $\Gal(\cO_R/R:\cO_P/P)$ under the natural reduction modulo $R$ is the Frobenius automorphism. The order of $g$ is then divisible by 
$r$, since an orbit of $g$ acting on $H=\Hom_{\vF_{q^d}(t)}(\vF_{q^d}(x),\vF_{q^d} M_f)$ has size $r$ (by the natural correspondence given by Theorem \ref{orbits}). As $r$ is prime, the only chance is that $g$ has a cycle of order $r$ in its decomposition in disjoint cycles. Now, as $r>n/2$, a certain power of $g$ will be a cycle of order $r$: this is our element $\gamma$.

Let us now summarize the properties of $A_f'$ given by the $d$-universality:
\begin{enumerate}
\item It contains a cycle of order $n/2<r<n-2$ (by the previous argument and a direct application of Theorem \ref{orbits}).
\item Since $f(X)-t_1$ is irreducible for some $t_1$, we get that  $A_f'$ contains a cycle of order $n$ by a direct application of Theorem \ref{orbits}. 
\item Analogously, it contains a cycle of order $n-1$.
\end{enumerate}

(1)+Lemma \ref{primitivegroup} implies that $A_f'$ is primitive, therefore, (1)+(2) implies that $A_f'$ contains the alternating group thanks to a theorem of Jordan \cite[Theorem 13.9]{wielandt2014finite}.
Then (2)+(3) implies that $A_f'$ is not the alternating group.
It follows that $A_f'= \mathcal S_n$.
Let us now show that $A_f=A_f'$.
Recall that $\tilde k$ is the constant field of $M_f$. Let $k'=\vF_{q^d}$ and $\tilde{k}'=\tilde k\cap k'$. By Lemma \ref{compositumfinitefield}
\[\mathcal S_n=A'_f=\Gal(k'M_f:k'(t))=\Gal(M_f:\tilde{k}'(t)).\]
Now, by observing $\Gal(M_f:\tilde{k}'(t))\leq \Gal(M_f:\vF_q(t))=A_f\leq \mathcal S_n$ we conclude $A_f'=A_f$.

We have now to show that the field of constants of $M_f$ is indeed $\vF_q$.
The only other possibility is that the field of constants is $\tilde k=\vF_{q^2}$ as for $n\geq 5$,  $\mathcal S_n$ has no normal subgroups other than the alternating group $\mathcal A_n$. 
The reader should notice that if $d$ is even then $\tilde k'=\tilde k=\vF_{q^2}$, therefore we are done by the fact that $G_f=\Gal(M_f:\tilde{k}'(t))=\Gal(M_f:\tilde{k}(t)) =\cS_n$.
Thus, we restrict to the case $d$ odd. Let us argue by contradiction by supposing $k'\tilde k=\vF_{q^{2d}}$.
Suppose that $n=\deg(f)$ is odd, and let $t_1\in \vF_{q^d}$ for which $f(x)-t_1$ is irreducible of degree $n$. Let us denote by $P_1$ the place corresponding to $t_1$ in $\vF_{q^d}(t)$, $Q\subset \vF_{q^{2d}}(x)$ be the place over $P_1$ corresponding to the irreducible polynomial $f(x)-t_1$, and $R$ a place of $\vF_{q^d}M_f$ lying over $Q$. Since $Q$ is unique and unramified, then $R$ is unramified. Therefore, $D(R|P_1)$ is cyclic and it has exactly one orbit of order $n$ corresponding to $Q$ under the bijection given by Theorem \ref{orbits}. It follows that any generator of $D(R|P_1)$ is a cycle of order $n$, so $D(R|P_1)$ has order $n$. On the other hand, the order of $D(R|P_1)$ is also $f(R|P_1)$, which is divisible by  $[\vF_{q^{2d}}:\vF_{q^{d}}]=2$ thus we have a contradiction.

If $n$ is even, then take $t_2$ for which $f(x)-t_2$ has an irreducible factor $h(X)$ of degree $n-1$ (and therefore also a factor of degree $1$). Let $P_2$ be the place corresponding to $t_2$ and $Q_1$, $Q_2$ be the places of $\vF_{q^{2d}}(x)$ corresponding respectively to $h(X)$ and to the factor of degree one of $f(x)-t_2$. Let $R$ be a place of $\vF_{q^d}M_f$ lying over $P_2$. Since $Q_1$ and $Q_2$ are the unique places of $\vF_{q^{2d}}(x)$ lying over $P_2$ and they are both unramified, then any place $R$ lying above $P_2$ is unramified.
Arguing similarly as before, we get that $D(R|P_2)$ is cyclic and it has a cycle of order $n-1$, therefore $f(R|P_2)=|D(R|P_2)|=n-1$. On the other hand, since the size of the decomposition group is divided by $[\vF_{q^{2d}}:\vF_{q^{d}}]=2$, we get the contradiction we wanted.

This shows that the constant field of $\vF_{q^d} M_f$ is $\vF_{q^d}$. On the other hand, the field of constants of $\vF_{q^d}M_f$ is $\tilde k \vF_{q^d}$: as $d$ is odd, this forces $\tilde k=\vF_q$ (as the only other chance was $k'=\vF_{q^{2d}}$).

%
%

Let us prove the other implication. Suppose that $G_f=A_f=\cS_n$ and fix $\ell\in\{1,\dots,n\}$. Let now $\gamma$ be a cycle of $G_f$ of order $\ell$ and let $\Gamma$ be its conjugacy class. In the notation of Theorem \ref{handlecheb}, for any $d\in \vN$
we have that 
\[|\{P\in S_{\vF_{q^d}(t)}| \deg_{\vF_{q^d}}(P)=1, (P,M:K)=\Gamma\}|=\frac{|\Gamma|}{|G_f|}q^{d}+O\left(q^{d/2}\right),\]
where the implied constant is independent of $d$ and $q$.
This shows immediately that, when $d$ is large enough, there is an unramified place $P$ of degree $1$ in $\vF_{q^d}(t)$ (corresponding to an element $t_0\in \vF_{q^d}$) for which $\gamma$ is the Frobenius (for some place of $\vF_{q^d}M_f$ lying over $P$). As $\gamma$ is a cycle of order $\ell$, by applying Theorem \ref{orbits} we get that 
$f(X)-t_0$ has a factor of degree $\ell$ in $\vF_{q^d}[X]$.
\end{proof}

\begin{remark}
The philosophy behind the proof of the first implication of Theorem \ref{monodromy} can be applied to prove similar statements, so we highlight the two most important steps here. 
The first step is to use the property we are interested in to obtain a bunch of group theoretical conditions via Theorem \ref{orbits} (conditions 1,2,3 in the proof of Theorem \ref{monodromy}). Then, once the candidate arithmetic monodromy group is described, we have to understand how the property we require  from $f$ (in this case universality), combined with the group theoretical properties we found, affect the possible field of constants of $M_f$ (in our case we could prove that $k_f$ is trivial).  
The critical advantage of the method is that one can use powerful group theoretical machinery to obtain complete characterization of monodromy groups. Moreover, via Chebotarev Density Theorem, the monodromy groups capture all the splitting statistics of the map $f$ as long as the base field is large enough compared with the degree of $f$.
\end{remark}

\begin{remark}
The reader should notice that the second implication i.e. the last paragraph of the above proof can also be deduced by \cite[Theorem 1]{cohen1970distribution}.
\end{remark}

\begin{corollary}\label{strong_cor}
Suppose that $f$ is $\overline d$-universal for some $\overline d$, then there exists $d_0$ for which $f$ is $d$-universal for every $d>d_0$.
\end{corollary}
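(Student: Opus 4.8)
The plan is to deduce the corollary directly from Theorem \ref{monodromy} together with the Chebotarev estimate established in its proof; the only new ingredient is to make the phrase ``for $d$ large enough'' in the second implication of that theorem \emph{uniform} over the finitely many degrees $\ell\in\{1,\dots,n\}$. Since $f$ is $\overline d$-universal it is in particular universal, so---assuming as in Theorem \ref{monodromy} that $n=\deg(f)\geq 8$---we get $A_f=G_f=\mathcal{S}_n$, and in particular the field of constants of $M_f$ is $\vF_q$.

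First I would fix, for each $\ell\in\{1,\dots,n\}$, an $\ell$-cycle $\gamma_\ell\in G_f=\mathcal{S}_n$ (one $\ell$-cycle together with $n-\ell$ fixed points) and let $\Gamma_\ell$ be its conjugacy class. Because the constant field of $M_f$ is $\vF_q$, Lemma \ref{compositumfinitefield} applied with $\tilde k'=\tilde k\cap\vF_{q^d}=\vF_q$ shows that the base-changed extension $\vF_{q^d}M_f:\vF_{q^d}(t)$ again has Galois group $\mathcal{S}_n$ and trivial constant field extension over $\vF_{q^d}$. Applying Theorem \ref{handlecheb} to this extension yields, for every $d$,
\[
\left|\{P\in S_{\vF_{q^d}(t)}\mid \deg_{\vF_{q^d}}(P)=1,\ (P,\vF_{q^d}M_f:\vF_{q^d}(t))=\Gamma_\ell\}\right|=\frac{|\Gamma_\ell|}{n!}q^d+O(q^{d/2}).
\]
The crucial point is that the implied constant is independent of $d$: by Theorem \ref{handlechgeneral} it is governed by the genus of $\vF_{q^d}M_f$, and this genus equals the genus of $M_f$ because $\vF_{q^d}\supseteq\vF_q$ is a constant field extension, which preserves the genus.

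Next I would extract a uniform threshold from positivity of this count. Bounding the error by $Cq^{d/2}$ with $C$ independent of $d$, the count is bounded below by $q^{d/2}\bigl(\tfrac{|\Gamma_\ell|}{n!}q^{d/2}-C\bigr)$, which is strictly positive as soon as $q^{d/2}>C\,n!/|\Gamma_\ell|$; since the left-hand side increases with $d$, this inequality persists once satisfied, so there is a threshold $d_\ell$ with the count positive for all $d\geq d_\ell$. Each such place $P$ corresponds to some $t_0\in\vF_{q^d}$ that is unramified in $\vF_{q^d}M_f$ and whose Frobenius lies in $\Gamma_\ell$; since its decomposition group is generated by an $\ell$-cycle, Theorem \ref{orbits} gives an orbit of size $\ell$, hence (using $e=1$ by unramifiedness) an irreducible factor of $f-t_0$ of degree $\ell$ over $\vF_{q^d}$. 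Finally I would set $d_0=\max_{1\leq \ell\leq n}d_\ell$, so that for every $d>d_0$ and every $\ell\leq n$ a valid $t_0$ exists, which is exactly the assertion that $f$ is $d$-universal.

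The main thing to get right is the uniformity over $d$ of the Chebotarev error term---equivalently, the invariance of the genus under the constant field extensions $\vF_{q^d}/\vF_q$---since this is what upgrades the pointwise ``large enough $d$'' of Theorem \ref{monodromy} to a genuine threshold beyond which positivity holds for \emph{every} $d$. The remainder is bookkeeping: the index set $\{1,\dots,n\}$ is finite, so $d_0=\max_\ell d_\ell$ is well defined, and the passage from a nonzero Chebotarev count to an irreducible factor of prescribed degree is precisely the orbit correspondence of Theorem \ref{orbits} already exploited in the proof of Theorem \ref{monodromy}.
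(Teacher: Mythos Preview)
Your proof is correct and follows essentially the same route as the paper's: invoke Theorem~\ref{monodromy} to obtain $A_f=G_f=\mathcal S_n$, then rerun the Chebotarev argument from its second implication. You are simply more explicit than the paper about two points that the paper leaves implicit---namely, that the implied constant in the $O(q^{d/2})$ term is uniform in $d$ (via genus invariance under constant field extension), and that one then takes $d_0=\max_\ell d_\ell$ over the finitely many $\ell$.
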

\begin{proof}
Suppose that $f$ is $\overline d$-universal, then $G_f=A_f=S_n$. By the same argument as in the proof the second implication of Theorem \ref{monodromy}, it follows that when $d$ is large enough the number of $t_0\in \vF_{q^d}$ for which $f(x)$ has an irreducible factor of degree $\ell$ can be estimated with $\frac{|\Gamma|}{|G_f|}q^d$ for $\Gamma$ the conjugacy class of an element having a cycle of order $\ell$ in its decomposition in disjoint cycles (one can actually directly select an element which "is" a cycle of order $\ell$).
\end{proof}

\begin{corollary}
A universal polynomial $f$ of degree greater than or equal to $8$ is indecomposable, i.e. it cannot be written as composition of lower degree polynomials
\end{corollary}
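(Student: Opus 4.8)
The plan is to prove the contrapositive via the characterization just established: if $f$ has degree $n \geq 8$ and is decomposable, then I would show its arithmetic monodromy group $A_f$ cannot be the full symmetric group $\mathcal S_n$, whence by Theorem \ref{monodromy} the polynomial $f$ is not universal. So suppose $f = g \circ h$ with $\deg(g) = m$ and $\deg(h) = s$, where $m, s \geq 2$ and $ms = n$. The key structural fact is that a nontrivial composition induces a nontrivial block structure on the roots of $f(X) - t$, which forces $A_f$ to act imprimitively.

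First I would make the block structure explicit. Working over $K = \vF_q(t)$, set $L = K[x]/(f(x) - t)$ as in the excerpt, and let $A_f$ act on the $n$ roots of $f(X) - t$, equivalently on $H = \Hom_K(L, M_f)$. Writing $f - t = g(h(X)) - t$, I would factor the situation through the intermediate field $K' = K[y]/(g(y) - t)$, so that $L \supseteq K' \supseteq K$ with $[K' : K] = m$ and $[L : K'] = s$. Each root $\beta$ of $g(Y) - t$ determines the set of $s$ roots $X$ of $f(X) - t$ with $h(X) = \beta$; these $m$ fibers partition the $n$ roots into $m$ blocks of size $s$. The crucial point is that this partition is $A_f$-invariant: any $\sigma \in A_f$ permutes the roots of $f - t$ while respecting the value of $h$, because $h$ has coefficients in the base field and $g(h(X)) = t$ is preserved, so $\sigma$ sends the fiber over $\beta$ to the fiber over $\sigma(\beta)$. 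Since $2 \leq m \leq n/2$ (as $s \geq 2$), we have $1 < s < n$, so the blocks are genuinely nontrivial, and the action is non-primitive in the sense of the definition in the excerpt.

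Now I would close the argument group-theoretically. The symmetric group $\mathcal S_n$ acting on $\{1, \dots, n\}$ in its natural action is primitive for $n \geq 2$ (its only invariant partitions are the trivial ones), so an imprimitive subgroup of $\mathcal S_n$ cannot equal $\mathcal S_n$. Hence $A_f \neq \mathcal S_n$. By the forward implication of Theorem \ref{monodromy} — which states that universality of a degree-$n$ polynomial with $n \geq 8$ forces $A_f = G_f = \mathcal S_n$ — a decomposable $f$ of degree at least $8$ cannot be universal. This is the contrapositive of the claim, completing the proof.

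The main obstacle I anticipate is verifying rigorously that the fiber partition is preserved by all of $A_f$, not merely by $G_f$ or by the subgroup fixing $K'$; one must be careful that the decomposition $f = g \circ h$ genuinely lives over $\vF_q(t)$ and that the intermediate extension $K'$ is well-defined and stable, so that the block system is intrinsic to the $A_f$-action on roots rather than an artifact of a particular splitting field embedding. I would handle this by appealing to the standard correspondence (implicit in the setup around Theorem \ref{orbits}) between intermediate fields $K \subseteq K' \subseteq L$ and block systems of the point-stabilizer action, which guarantees that a proper intermediate field yields a nontrivial $A_f$-invariant partition. Everything else is routine, and no lower bound on the constant-field degree $d$ is needed, since I only use the structural statement $A_f = \mathcal S_n$ from the characterization.
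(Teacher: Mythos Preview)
Your proof is correct and follows essentially the same approach as the paper: both arguments combine Theorem \ref{monodromy} (universality forces $A_f=G_f=\mathcal S_n$) with the standard equivalence between indecomposability of $f$ and primitivity of the monodromy action on the roots of $f-t$. The paper simply cites this equivalence (referring to \cite[Section 2.3]{muller1995primitive}) and argues directly, whereas you take the contrapositive and spell out explicitly why a decomposition $f=g\circ h$ yields an $A_f$-invariant block system; this makes your version more self-contained but not genuinely different in strategy.
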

\begin{proof}
By Theorem \ref{monodromy}, it is enough to observe that $\mathcal S_n$ acts primitively on the roots of $f$. This forces the polynomial to be indecomposable (see for example \cite[Section 2.3]{muller1995primitive}).
\end{proof}


\section{Universality for $X^q+X^2-t$}\label{sec:Xq+X2-t}
In this section let us specialize to the polynomial $f=X^q+X^2$, as this is the one suggested for the function field sieve \cite{Barbulescu2014} and experimentally is believed to be  $2$-universal, see last paragraph in \cite[Section 5]{Barbulescu2014}.
For this section we will restrict to $q$ odd.
Let us recall a result due to Turnwald \cite{turnwald1995schur}.
\begin{theorem}\label{galoisgroupturnwald}
Let $k$ be a field of characteristic different from $2$ and 
$g\in k[X]$. Suppose that the derivative $g'$ of $g$ has at least a simple root and
 for any pair of roots $\alpha,\beta$ of $g'$ over $\overline{k}$ we have that
  $g(\alpha)\neq g(\beta)$. 
  In addition suppose that $\Chr(k)
  \nmid 
  \deg(g)$.
Then the Galois Group of $g-t$ over $k(t)$ is $\mathcal S_{\deg(g)}$.
\end{theorem}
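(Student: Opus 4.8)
The plan is to show that under the stated hypotheses the arithmetic monodromy group $A=\Gal(g-t:k(t))$ is a primitive permutation group on the $n=\deg(g)$ roots of $g-t$ that contains a transposition, and then invoke Jordan's classical theorem that a primitive subgroup of $\mathcal S_n$ containing a transposition must be all of $\mathcal S_n$. This is the standard route for computing monodromy groups via the ramification of the cover $x\mapsto g(x)$, and the two analytic hypotheses on $g'$ are precisely what one needs to produce a transposition in the inertia at each branch point.

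First I would set up the geometric picture: the extension $k(x):k(t)$ with $t=g(x)$ is separable because $\Chr(k)\nmid\deg(g)=n$ (so $g'\neq 0$ and $g\notin k[X^p]$), and the places of $k(x)$ ramifying over $k(t)$ correspond to the finite branch points $t_0=g(\alpha)$ with $\alpha$ a root of $g'$, together with the point at infinity. The hypothesis that $g(\alpha)\neq g(\beta)$ for distinct roots $\alpha,\beta$ of $g'$ guarantees that the branch points $g(\alpha)$ are genuinely separated, so that over each finite branch point the ramification is controlled by a single critical value. The hypothesis that $g'$ has at least one simple root means there is a branch point $t_0=g(\alpha_0)$ over which exactly one point of $k(x)$ is ramified with ramification index $2$ and all others are unramified; the inertia generator at such a place acts as a single transposition on the roots. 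I would make this precise by analyzing the factorization of $g(X)-t_0$ at the special value, where the simple root $\alpha_0$ of $g'$ produces exactly one double root of $g-t_0$ and no higher-order coincidences.

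Next I would establish transitivity and primitivity. Transitivity is automatic since $g(X)-t$ is irreducible over $k(t)$ (the polynomial is irreducible because it is linear in $t$), so $A$ is a transitive subgroup of $\mathcal S_n$. For primitivity I would argue that the total ramification at infinity is a single point of ramification index $n$ (as $g$ has degree $n$, the place at infinity of $k(t)$ is totally ramified in $k(x)$), giving an $n$-cycle in $A$; but an $n$-cycle together with transitivity need not force primitivity directly, so instead I would use the transposition together with the connectedness/genus considerations, or more cleanly observe that a transitive group containing a transposition whose support meets the blocks nontrivially cannot preserve a block system unless the block size is $1$ or $n$. The cleanest formulation is: a transposition together with the primitivity criterion forces the conclusion once we know the transposition generates enough, which is exactly Jordan's theorem applied after primitivity is verified.

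The main obstacle I anticipate is the careful local analysis producing \emph{exactly} a transposition rather than some larger inertia element, and ensuring primitivity rather than merely transitivity. Concretely, the delicate point is to translate the two hypotheses on $g'$ into the statement that the inertia group at the branch point coming from a simple root of $g'$ is generated by a transposition, and then to rule out nontrivial block systems. I expect the transposition-plus-primitivity argument to be the technical heart: once a primitive group contains a transposition, Jordan's theorem (the same circle of ideas used via \cite[Theorem 13.9]{wielandt2014finite} in the proof of Theorem \ref{monodromy}) immediately yields $A=\mathcal S_n$. A subtlety worth checking is that the argument gives the full symmetric group over $k(t)$ itself and not merely over an extension of the constant field, but since the conclusion is about the splitting field of $g-t$ over $k(t)$ and $\mathcal S_n$ is the largest possible group, the arithmetic monodromy group equals $\mathcal S_n$ with no room for a proper constant-field subtlety.
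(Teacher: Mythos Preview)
The paper does not give its own proof of this theorem: it is quoted as a known result of Turnwald and cited as \cite{turnwald1995schur}. So there is nothing in the paper to compare your argument against directly. Your overall strategy---produce a transposition from the simple critical point, establish primitivity, and then invoke the classical fact that a primitive subgroup of $\mathcal S_n$ containing a transposition is all of $\mathcal S_n$---is indeed the standard route and is essentially how the result is proved.

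There is, however, a genuine gap in your primitivity step. Your proposed observation that ``a transitive group containing a transposition whose support meets the blocks nontrivially cannot preserve a block system'' does not do the job: a transposition can sit entirely inside a block. Concretely, $D_4=\langle (1\,2\,3\,4),(1\,3)\rangle\leq\mathcal S_4$ is transitive, contains the $4$-cycle and the transposition $(1\,3)$, yet is imprimitive with blocks $\{1,3\},\{2,4\}$. So neither ``transitive $+$ transposition'' nor ``$n$-cycle $+$ transposition'' forces primitivity, and your sketch does not explain why the particular transposition you obtain cannot have support contained in a block.

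The fix uses the full strength of the hypothesis that \emph{all} critical values are distinct, not merely the one coming from the simple root. One clean route is indecomposability: if $g=g_1\circ g_2$ with $\deg g_i\geq 2$ (and $\Chr(k)\nmid\deg g$ forces $\Chr(k)\nmid\deg g_i$), pick a root $\alpha$ of $g_1'$ and two distinct preimages $\beta_1,\beta_2\in g_2^{-1}(\alpha)$; these are distinct roots of $g'$ with $g(\beta_1)=g_1(\alpha)=g(\beta_2)$, a contradiction. The degenerate case where every $g_2^{-1}(\alpha)$ is a single point forces $g$ to be a pure power up to affine change, whence $g'$ has a unique multiple root and no simple root, contradicting the other hypothesis. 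Indecomposability of $g$ over $\overline k$ is equivalent to primitivity of the monodromy action, and then your transposition finishes the proof. Alternatively one can run the ``tree'' argument: the finite inertia generators are cycles of lengths $m_i+1$ with $\sum m_i=n-1$, and a transitive group generated by such cycles is $\mathcal S_n$; but this also requires more than the single transposition you isolated.
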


With this tool in hand, we are able to compute the  arithmetic and the geometric monodromy group of $X^q+X^2$.

\begin{proposition}\label{galoisgroupmypol}
Let $\Chr(\vF_q)\neq 2$ and $f=X^q+X^2\in \vF_q[X]$.
The Galois Group $A_f$ of $f-t\in \vF_q(t)[X]$ over $\vF_q(t)$ is 
$\mathcal S_q$. Moreover $G_f=A_f$.
\end{proposition}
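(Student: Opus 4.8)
**The plan is to verify the hypotheses of Turnwald's theorem (Theorem \ref{galoisgroupturnwald}) for $g=X^q+X^2$, which immediately yields $A_f=\mathcal S_q$, and then to argue separately that the geometric monodromy group equals the arithmetic one.**

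First I would compute the derivative. Over $\vF_q$ with $q=p^a$ and $p$ odd, we have $g'=qX^{q-1}+2X=2X$, since $qX^{q-1}=p^aX^{q-1}$ vanishes identically in characteristic $p$. Thus $g'=2X$, whose only root is $\alpha=0$, and this root is simple. So the first hypothesis (the derivative has a simple root) holds, and in fact the condition "for any pair of roots $\alpha,\beta$ of $g'$ we have $g(\alpha)\neq g(\beta)$" is vacuously satisfied: there is only one root of $g'$, namely $0$, so there are no distinct pairs $\alpha\neq\beta$ to check. (If one prefers to read the condition as ranging over all pairs including $\alpha=\beta$, it is still fine since the requirement is only meaningful for distinct roots.) Finally I must check $\Chr(k)\nmid\deg(g)$: here $\Chr(\vF_q)=p$ and $\deg(g)=q=p^a$, so $p\mid q$ and the hypothesis \emph{fails}. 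This is the main obstacle, and it means Turnwald's theorem cannot be applied directly to $g=X^q+X^2$ as stated.

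To get around this I would instead apply Turnwald's theorem after a change of perspective, or apply it to a related polynomial. The natural move is to study $f-t$ over $\vF_q(t)$ by viewing the extension through its ramification. Since $g'=2X$ vanishes only at $X=0$, the map $x\mapsto x^q+x^2$ is ramified (over $\overline{\vF_q(t)}$) essentially only above the branch point $t_0=g(0)=0$ and above $t=\infty$; one computes the ramification type at $0$ (the point where $x=0$, giving a ramification index coming from the order of vanishing of $g-g(0)=X^q+X^2=X^2(X^{q-2}+1)$, hence a single ramified place of index $2$) and at infinity (where $g$ has degree $q$, totally ramified of index $q$, which is wild since $p\mid q$). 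The strategy is then to show primitivity of $G_f$ and produce the right cycle types using the same Jordan-theorem machinery as in Theorem \ref{monodromy}: the tame ramification at $t=0$ contributes a transposition (inertia generated by a double point against $q-2$ fixed-behaving points yields a $2$-cycle), and transitivity together with a primitivity argument then forces $G_f$ to contain $\mathcal A_q$; a transposition inside a primitive group acting on $q$ points, by Jordan's theorem, upgrades this to the full symmetric group $\mathcal S_q$.

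For the equality $G_f=A_f$, I would use that $G_f\trianglelefteq A_f\leq\mathcal S_q$ and that once $G_f=\mathcal S_q$ is the full symmetric group there is no room for $A_f$ to be strictly larger, since $A_f\leq\mathcal S_q$ as well; hence $G_f=A_f=\mathcal S_q$. Equivalently, this shows the field of constants $\tilde k$ of $M_f$ equals $\vF_q$, by exactly the kind of constant-field argument used in the proof of Theorem \ref{monodromy} (a nontrivial constant extension would correspond to a proper normal subgroup of $\mathcal S_q$ containing $\mathcal A_q$, and the ramification/splitting data rules this out). The hardest part of the whole argument, I expect, is handling the wild ramification at infinity carefully enough to extract usable cycle-type information there, and ensuring the tame transposition at $t=0$ is correctly identified; the failure of the divisibility hypothesis in Turnwald's theorem is precisely why a direct invocation is unavailable and why this more hands-on ramification analysis is needed.
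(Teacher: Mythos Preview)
Your diagnosis that Turnwald's theorem cannot be applied directly because $p\mid\deg(f)=q$ is exactly right, and your identification of a tame transposition in $G_f$ coming from the factorization $f(X)=X^2(X^{q-2}+1)$ above $t=0$ is correct. But your route diverges from the paper's and leaves a genuine gap.

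The paper does not touch ramification theory. Instead it makes the substitution $X\mapsto X+x$, where $x$ is a fixed root of $f-t$: one checks that the remaining roots are $x+y_i$ with $y_i$ running over the zeros of $Y^{q-1}+Y+2x\in\vF_q(x)[Y]$. This auxiliary polynomial has degree $q-1$, which is \emph{not} divisible by $p$, so Turnwald's theorem now applies over the base field $\vF_q(x)$ and gives $\Gal(M_f:\vF_q(x))\cong\cS_{q-1}$. Since $A_f$ is transitive on the $q$ roots and the stabilizer of $x$ already has order $(q-1)!$, orbit--stabilizer forces $|A_f|\geq q!$, hence $A_f=\cS_q$. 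Rerunning the identical computation over $\tilde k$ in place of $\vF_q$ yields $G_f=\cS_q$.

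Your approach, by contrast, needs primitivity of $G_f$ before Jordan's theorem can bite on the transposition, and you do not supply it. A transposition inside a merely transitive group on $q=p^a$ points need not generate anything large when $a>1$: it could sit inside a single block of an imprimitive system. To close this you would have to either prove directly that $X^q+X^2$ is indecomposable, or extract enough structure from the wild inertia at $\infty$ to rule out block systems of size $p^k$ for $0<k<a$. Both are plausible but neither is a one-liner, and you yourself flag the wild ramification as ``the hardest part'' without resolving it. The paper's substitution trick sidesteps both the primitivity question and the wild ramification entirely, at the price of one algebraic identity and a second invocation of Turnwald in the coprime degree $q-1$. Your final step, deducing $A_f=\cS_q$ from $G_f=\cS_q$ via $G_f\leq A_f\leq\cS_q$, is fine and matches the spirit of the paper's conclusion.
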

\begin{proof}
Clearly, Theorem \ref{galoisgroupturnwald} does not apply to the polynomial above as its degree is divisible by the characteristic of the field.
Let us consider the extension $\vF_q(x):\vF_q(t)$ where $x$ is a root of $f-t$, and then verifies $f(x)=x^q+x^2=t$. Let $\{y_1,\dots,y_{q-1}\}$ be the set of roots of $X^{q-1}+X+2x\in\vF_q(x)[X]$. They are all distinct, as the polynomial is separable. It is easy to see that $x+y_i$ is a root of $f-t$ for any $i\in \{1,\dots q-1\}$.
Therefore, the splitting field $M_f$ of $f-t$ over $\vF_q(t)$ is exactly $\vF_q(x,y_1,\dots,y_{q-1})$. 
Let us now consider  $B=\Gal(M_f:\vF_q(x))$ which is a subgroup of $A_f=\Gal(M_f:\vF_q(t))$. The Galois Group $B$ is the same as the Galois group of the polynomial $\frac{X^{q-1}+X}{-2}-x$ over $\vF_q(x)$, for which Turnwald theorem applies with base field $\vF_q(x)$ since
\begin{itemize}
\item $\Chr(\vF_q)\neq 2$
\item The roots of $X^{q-2}-1$ are $\xi^i$ for $\xi$ a primitive $(q-2)$-root of unity and $i\in\{0,\dots,q-3\}$.
\item  $\frac{\xi^{i(q-1)}+\xi^i}{-2}=\frac{\xi^{j(q-1)}+\xi^j}{-2}$ implies $\xi^{i}=\xi^{j}$ but then $i=j$.
\end{itemize}
We are now sure that the Galois Group of $\frac{X^{q-1}+X}{-2}-x$  is $B=\cS_{q-1}$. Observe that $B\leq A_f$ and $A_f$ acts transitively on the set of roots $\{x,x+y_1,x+y_2,\dots, x+y_{q-1}\}$ and the stabilizer of $x$ contains $B$. By the orbit-stabilizer theorem we have that
\[q=\frac{|A_f|}{|\St_{A_f}(x)|}\leq \frac{|A_f|}{|B|}=\frac{|A_f|}{(q-1)!}\]
Therefore $|A_f|\geq q!$ but also $|A_f|\leq q!$ as $A_f$ is a subgroup of $\cS_q$, so $A_f=\cS_q$. 
We have now to show that $G_f=A_f$. Suppose that the constant field of $M_f$ is $\tilde k$ and notice that all the arguments above apply again by replacing $\vF_q$ with $\tilde k$. Hence this immediately shows $G_f=\mathcal S_q$.
\end{proof}

\begin{corollary}\label{finalcorollary}
There exists $d_0\in\vN$ such that $X^q+X^2$ is $d$-universal for any  $d>d_0$.
\end{corollary}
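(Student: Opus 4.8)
The plan is to combine the two main results already established in this section and the previous one. By Proposition \ref{galoisgroupmypol}, for odd $q$ the polynomial $f = X^q + X^2$ has arithmetic and geometric monodromy groups $A_f = G_f = \mathcal{S}_q$. Since $\deg(f) = q$ and we are in odd characteristic with $q \geq 3$, we need $q \geq 8$ to invoke Theorem \ref{monodromy}; for the finitely many small odd $q$ (namely $q = 3, 5, 7$) one checks the claim separately, or simply notes that the degree hypothesis $n \geq 8$ of Theorem \ref{monodromy} is where the argument genuinely lives.

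First I would verify that the condition $A_f = G_f = \mathcal{S}_n$ with $n = q$ is exactly the hypothesis appearing in the ``if'' direction of Theorem \ref{monodromy}. That direction shows that whenever the monodromy groups coincide with the full symmetric group, the polynomial is universal, i.e. it is $d$-universal for some $d$. Thus Proposition \ref{galoisgroupmypol} feeds directly into Theorem \ref{monodromy} to conclude that $f = X^q + X^2$ is universal, hence $\overline{d}$-universal for some particular $\overline{d} \in \vN$.

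Next I would upgrade this single value $\overline{d}$ to a cofinite set of admissible $d$ by appealing to Corollary \ref{strong_cor}. That corollary states precisely that if $f$ is $\overline{d}$-universal for some $\overline{d}$, then there exists $d_0$ such that $f$ is $d$-universal for every $d > d_0$. Applying it to our $f = X^q + X^2$ yields the desired $d_0 \in \vN$ with the property that $X^q + X^2$ is $d$-universal for all $d > d_0$, which is exactly the statement of the corollary.

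The argument is therefore essentially a two-line citation chain, and there is no serious obstacle: the substantive mathematics has already been done in Proposition \ref{galoisgroupmypol} (the monodromy computation via Turnwald's theorem) and in Theorem \ref{monodromy} together with Corollary \ref{strong_cor} (the Chebotarev-density passage from symmetric monodromy to universality and its stability under enlarging $d$). The only point requiring a moment of care is the degree restriction $n \geq 8$ inherited from Theorem \ref{monodromy}, so I would either restrict attention to $q \geq 11$ (the smallest odd prime power exceeding $8$ being handled uniformly) or remark explicitly that the stated $d_0$ depends on $q$ and that the claim is understood for $q$ large enough to satisfy that bound.
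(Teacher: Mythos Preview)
Your proposal is correct and follows essentially the same approach as the paper: invoke Proposition \ref{galoisgroupmypol} to get $A_f=G_f=\mathcal S_q$, feed this into Theorem \ref{monodromy} (which requires $\deg f\geq 8$), and then apply Corollary \ref{strong_cor}; the paper likewise disposes of the small cases $q<8$ by individual checking. One minor slip: the smallest odd prime power with $q\geq 8$ is $q=9$, not $11$, so the separate verification is needed only for $q\in\{3,5,7\}$.
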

\begin{proof}
By individually checking the cases $q<8$ we can assume $q\geq 8$.
By the previous result we have that Theorem \ref{monodromy} applies, therefore it also applies Corollary \ref{strong_cor}, which is exactly the claim.
\end{proof}
The reader should notice now that the first occurence of $d_1$ for which $X^q+X^2$ is $d_1$ universal might be strictly less than $d_0$.
What would be ideal to show, is that  $d_0$ is indeed ``small" enough (conjecturally it is $2$), on the other hand the above corollary at least shows that such $d$ exists.

\section{Constructing $d$-universal polynomials in odd characteristic}

The combination of Theorem \ref{galoisgroupturnwald} and Theorem \ref{monodromy} gives a deterministic easy way to construct polynomials which are likely to build up any extension between the base field and the degree of the polynomial satisfying Conjecture \ref{strongerconj}.
We give a class of examples in the next subsection. For the rest of this section, $q$ will be an odd prime power.
\subsection{Universality for $X^{q+j}-jX$}\label{xqjjx}

In this subsection we show a large class of polynomials which can be shown to be universal. In addition such polynomials appear to be always $d$-universal for a small $d$. 
\begin{proposition}
Let $q$ be an odd prime power and $\vF_p$ be its prime subfield. Let $j\in \vN\setminus \{0,1,pk\}_{k\in \vN}$. The polynomial $f=X^{q+j}-jX\in \vF_q[X]$ is universal. 
\end{proposition}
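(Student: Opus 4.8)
# Proof Proposal

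The plan is to reduce the claim to Theorem \ref{monodromy} by showing that $f = X^{q+j} - jX$ has full symmetric monodromy group $A_f = G_f = \mathcal S_{q+j}$. Since $\deg(f) = q+j \geq 8$ for all but finitely many small cases (which can be checked individually), establishing $A_f = G_f = \mathcal S_{q+j}$ immediately yields universality via Theorem \ref{monodromy}.

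The main obstacle is the same one encountered for $X^q + X^2$: the degree $q+j$ is divisible by the characteristic $p$ (since $q = p^a$ and $p \nmid j$ gives $q + j \equiv j \not\equiv 0 \pmod p$ --- wait, I must be careful here). Let me reconsider: $q + j \equiv j \pmod p$, and since $j \notin \{pk\}$ we have $p \nmid j$, so in fact $p \nmid \deg(f)$. This is the crucial difference from the $X^q+X^2$ case. Nevertheless, Turnwald's Theorem \ref{galoisgroupturnwald} still cannot be applied directly, because the derivative $f' = (q+j)X^{q+j-1} - j = jX^{q+j-1} - j = j(X^{q+j-1}-1)$ has all simple roots but we must verify the separation condition $f(\alpha) \neq f(\beta)$ for distinct roots $\alpha, \beta$ of $f'$, and the high-degree structure makes this delicate.

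First I would compute $f'(X) = j(X^{q+j-1} - 1)$, whose roots are the $(q+j-1)$-th roots of unity $\zeta$ in $\overline{\vF_q}$; these are simple (as $p \nmid q+j-1$, which needs checking: $q+j-1 \equiv j-1 \pmod p$, so this requires $j \not\equiv 1$, consistent with $j \neq 1$, though one must also rule out $j-1 \equiv 0 \pmod p$ in general). At a root $\zeta$ of $f'$ we have $\zeta^{q+j-1} = 1$, so $\zeta^{q+j} = \zeta$ and hence $f(\zeta) = \zeta - j\zeta = (1-j)\zeta$. Thus for two roots $\zeta_1, \zeta_2$ of $f'$, the condition $f(\zeta_1) = f(\zeta_2)$ becomes $(1-j)\zeta_1 = (1-j)\zeta_2$, i.e. $\zeta_1 = \zeta_2$ provided $j \neq 1$. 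This is precisely why $j \neq 1$ is excluded in the hypothesis, and it gives exactly the separation required by Theorem \ref{galoisgroupturnwald}.

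Having verified all three hypotheses of Turnwald's theorem --- $\Chr(k) \neq 2$ (as $q$ is odd), $f'$ has simple roots with distinct critical values, and $p \nmid \deg(f)$ --- I would conclude that the Galois group of $f - t$ over $\vF_q(t)$ is $\mathcal S_{q+j}$, giving $A_f = \mathcal S_{q+j}$. To obtain $G_f = A_f$, I would argue as in Proposition \ref{galoisgroupmypol}: the entire Turnwald computation is insensitive to replacing the constant field $\vF_q$ by its algebraic closure $\tilde k$ in $M_f$, since the critical-value separation is a geometric statement over $\overline{\vF_q}$. Hence $G_f = \Gal(M_f : \tilde k(t)) = \mathcal S_{q+j}$ as well, forcing $\tilde k = \vF_q$ and $A_f = G_f = \mathcal S_{q+j}$. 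Applying Theorem \ref{monodromy} (for $q+j \geq 8$, handling smaller degrees by direct inspection) then establishes that $f$ is universal.
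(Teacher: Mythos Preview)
Your approach is essentially identical to the paper's: verify the hypotheses of Turnwald's theorem (simple critical points, distinct critical values $(1-j)\zeta$, and $p\nmid q+j$) to obtain $A_f=G_f=\mathcal S_{q+j}$, then invoke Theorem~\ref{monodromy}. The only cosmetic difference is that the paper applies Turnwald first to the geometric monodromy group and deduces $A_f=\mathcal S_{q+j}$ from $G_f\le A_f\le\mathcal S_{q+j}$, whereas you do it the other way around; your self-flagged worry about the case $p\mid j-1$ (which would make $f'$ inseparable) is a genuine edge case that the paper's proof also glosses over.
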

\begin{proof}
We would like to verify the conditions of Theorem \ref{galoisgroupturnwald} for the geometric monodromy group of $f$, then it will follow that also the arithmetic monodromy group of $f$ is the symmetric group, for which Theorem \ref{monodromy} now applies, showing the universality of $f$.

The derivative of $f$ is $f'=j X^{q+j-1}-j$. Since $j$ is different from $1$, then $f'$ has all single roots in $\overline \vF_q$. Now, any root of $f$ has the form $\xi^u$, where $\xi$ is a fixed primitive $q+j-1$ root of unity, and $u$ is an integer in $\{0,\dots, q+j-2\}$. 

It is now enough to observe that  $f(\xi^u)= \xi^u -j \xi^u=(1-j)\xi^u  \neq (1-j)\xi^v=f(\xi^v)$ for $u\neq v \mod q+j-1$.

The conditions of Theorem \ref{galoisgroupturnwald} are now verified and then Theorem \ref{monodromy} applies, leading to the claim.
\end{proof}

\begin{remark}
The experiments show that this class of polynomials actually verifies a stronger property, i.e. each of them seems to be $d$-universal for $d=j+1$. 
In particular, for $j=2$, the polynomial $X^{q+2}-2X$ is $3$-universal for any prime $q$ less than\footnote{The computations were performed in SAGE and the code is available upon request} $401$ therefore building up suitable extensions of size up to $401^{401}$.
\end{remark}

\section*{Acknowledgements}
The author is grateful to Michael Zieve for many interesting discussions and especially for introducing him to the version of Chebotarev Density Theorem used in this paper. The author also wants to thank Swiss National Science Foundation grant number 171248.

\bibliographystyle{plainnat}
\bibliography{biblio}{}

\end{document}